\numberwithin{equation}{section}
\numberwithin{figure}{section}
\theoremstyle{plain}
\newtheorem{theorem}{Theorem}[section]
\newtheorem{proposition}{Proposition}[section]
\newtheorem{lemma}{Lemma}[section]
\newtheorem{corollary}{Corollary}[section]
\newtheorem{conjecture}{Conjecture}[section]
\theoremstyle{definition}
\newtheorem{definition}{Definition}[section]
\theoremstyle{remark}
\newtheorem{remark}{Remark}[section]
\begin{document}

\title[Optimal (partial) transport to non-convex polygonal domains]
{Optimal (partial) transport to non-convex polygonal domains}

\author[S. Chen]{Shibing Chen}
\address{Shibing Chen, School of Mathematical Sciences,
  University of Science and Technology of China,
  Hefei, 230026, P.R. China.}
\email{chenshib@ustc.edu.cn}

\author[Y. Li]{Yuanyuan Li}
\address{Yuanyuan Li, Institute for Theoretical Sciences,
  Westlake University, Hangzhou, 310030, P.R. China.}
\email{lyyuan@westlake.edu.cn}

\author[J. Liu]{Jiakun Liu}
\address[Jiakun Liu]{School of Mathematics and Statistics,
  The University of Sydney,
  Camperdown, NSW 2006, Australia.}
\email{jiakun.liu@sydney.edu.au}

\thanks{The research of Chen was supported by the National Key R\&D Program of China
  (2022YFA1005400), the National Science Fund for Distinguished Young Scholars
  (No. 12225111), and NSFC No. 12141105. The research of Li was supported by
  China Postdoctoral Science Foundation (No. 2025M783144). The research of Liu
  was supported by ARC DP230100499 and FT220100368.}

\date{\today}

\begin{abstract}
  In this paper, we investigate optimal (partial) transport problems for which the
  target is a non-convex polygonal domain in \(\mathbb{R}^2\). For the complete
  optimal transport problem, we prove that the singular set is locally a smooth
  one-dimensional curve away from finitely many points. For the optimal partial
  transport problem, we prove that the free boundary is smooth away from finitely
  many singular points. In higher dimensions, we formulate two conjectures
  concerning the structure of singularities when the target is a non-convex
  polytope.
\end{abstract}

\maketitle
\setlength{\baselineskip}{16.4pt}
\setlength{\parskip}{3pt}

\section{Introduction}
Let $\Omega$ and $\Omega^*$ be two bounded domains in $\mathbb{R}^2$.
Let $m$ be a positive constant satisfying
\begin{equation*}
  m \leq \min\Big\{|\Omega|, \, |\Omega^*|\Big\},
\end{equation*}
where $|\cdot|$ denotes the two-dimensional Lebesgue measure.
A non-negative, finite Borel measure $\gamma$ on $\mathbb{R}^2 \times \mathbb{R}^2$ is a \emph{transport plan} (with mass $m$) from $\Omega$ to $\Omega^*$ if $\gamma(\mathbb{R}^2\times\mathbb{R}^2)  = m$ and, for every Borel set $E\subset\mathbb{R}^2$, the following conditions hold:
\begin{equation*}
  \begin{split}
    \gamma(E\times \mathbb{R}^2)  & \leq |E\cap\Omega|,   \\
    \gamma(\mathbb{R}^2 \times E) & \leq |E\cap\Omega^*|.
  \end{split}
\end{equation*}
A transport plan $\gamma$ is \textit{optimal} if it minimizes the cost functional
\begin{equation} \label{eq:mini}
  \int_{\mathbb{R}^2 \times \mathbb{R}^2} |x-y|^2\, d\gamma(x,y)
\end{equation}
among all transport plans, see \cite{V1,V2}.

When $|\Omega|=|\Omega^*|=m$, this is the complete optimal transport problem.
By Brenier's theorem \cite{Br1}, there exists a globally Lipschitz convex function $u:\mathbb{R}^2\rightarrow \mathbb{R}$
such that the optimal transport plan $\gamma=(Id\times Du)_\sharp \chi_{_\Omega}$, with
\begin{equation}\label{eq:bre1}
  (Du)_\sharp \chi_{_\Omega}= \chi_{_{\Omega^*}}
\end{equation}
and $Du(x) \in \overline{\Omega^*}$ for almost every $x \in \mathbb{R}^2$. The regularity of the potential function $u$ has been extensively studied, see \cite{C92,C92b,C96,CL2,CLW1,D91,CR,SY,U1}.
If the target domain $\Omega^*$ is non-convex, singularities may arise, see \cite{C92} for instance.
Let
\begin{equation}\label{eq:defA}
  A:=\{x\in \Omega: u \text{ is not differentiable at } x\}
\end{equation}
be the singular set.
The structure of the singular set in optimal transport was first investigated by Yu in \cite{Yu}, where he proved that if \(\Omega\) is convex and \(\Omega^*\) is a planar \(C^1\) domain satisfying certain mild conditions, then the singular set is a one-dimensional \(C^1\) manifold (a disjoint union of countably many \(C^1\) curves) except for a countable set. Subsequently, Figalli \cite{AFi2} established the same result for general non-convex planar domains.

Our first result gives a more precise description of the structure of the singular set $A$ when the target is a non-convex polygonal domain.

\begin{theorem}\label{thm:t111}
  Suppose \(\Omega\) is a bounded domain and \(\Omega^*\) is a non-convex polygonal domain in \(\mathbb{R}^2\) with $|\Omega|=|\Omega^*|$. Suppose \( u \) is a convex solution to \eqref{eq:bre1}. Then, the singular set \( A \) is either a finite set or, away from finitely many points
  \(\{x_i\}_{i=1}^n\), \(A\) is locally a smooth one-dimensional curve. Moreover, for each \(i = 1, \dots, n\), either \(x_i\) is an isolated singularity, or \(A\) is a union of finitely many Lipschitz curves near \(x_i\).
\end{theorem}

Optimal transport between polygonal domains has important practical applications. For example, in numerical computations, particularly in finite element methods, the generation of high-quality meshes on polygonal domains is crucial. Optimal transport can be used to design mesh generation algorithms that distribute mass over polygonal domains, improving the accuracy and efficiency of simulations. We refer the reader to \cite{Levy,Mer} and references therein.

\begin{remark}
  Let \(\{x_i\}_{i=1}^n\) be as in the above theorem. Denote \(F := \{x_i : i = 1, \dots, n\}\).
  If \(x \in A \setminus F\), then \(B_{r_x}(x) \cap A\) is a smooth curve for some small \(r_x > 0\).
  Because \(F\) is finite, for each \(x_i\), if it is not an isolated singular point, there exists a small \(r_i > 0\) such that
  \(B_{r_i}(x_i) \cap A = \cup_{j=1}^{k_i}\gamma_{ij}\) for some positive integer \(k_i\), where \(\gamma_{ij}\) is a Lipschitz curve that is smooth except at the point \(x_i\).
\end{remark}

When $m<\min\Big\{|\Omega|, \, |\Omega^*|\Big\}$, the minimization problem \eqref{eq:mini} is called the \emph{optimal partial transport problem}.
The word ``partial'' indicates that not all of the mass in $\Omega$ is transported to $\Omega^*$. The existence and uniqueness of the optimal transport plan have been established in \cite{CM}, under the assumption that $\Omega$ and $\Omega^*$ are separated by a straight line. For more general cases, the reader is referred to \cite{AF09,AFi,I} and references therein.
Let $U\subset \Omega$ be the subdomain in which the mass $m=|U|$ is transported to $V\subset\Omega^*$ by the optimal transport plan. The sets $\mathcal F:=\partial U\cap \Omega$ and $\mathcal F^*:=\partial V\cap\Omega^*$ are called the ``free boundaries'' of the problem. Our second main result of this paper concerns the regularity of the free boundary.

\begin{theorem}\label{thm:t222}
  Suppose \(\Omega\) is a bounded domain and \(\Omega^*\) is a non-convex polygonal domain in \(\mathbb{R}^2\),  separated from \(\Omega\) by a straight line. Let $m<\min\Big\{|\Omega|, \, |\Omega^*|\Big\}$ be the mass transported. Then the free boundary $\mathcal{F}$ is smooth away from finitely many singular points.
\end{theorem}

\begin{remark}
  The results of Theorems \ref{thm:t111} and \ref{thm:t222} also hold for general positive smooth densities. If the densities are only
  bounded above and below by positive constants, then, except for finitely many points, the singular set in Theorem \ref{thm:t111} and the free boundary in Theorem \ref{thm:t222} are locally $C^{1,\alpha}$ curves.
\end{remark}

The remainder of the paper is organized as follows. In Section \ref{sec:S2}, we introduce some tools and definitions that will be used in subsequent analysis. In Section \ref{sec:S3}, we investigate the structure of the singular set $A$ and prove Theorem \ref{thm:t111}. In Section \ref{sec:S4}, we study the regularity of the free boundary $\mathcal{F}$ and prove Theorem \ref{thm:t222}.
In Section \ref{sec:S5}, we discuss two conjectures concerning the structure of singularities in higher dimensions.
In Section \ref{sec:S6}, we sketch the proof of the \(C^{2,\alpha}\) estimate that completes the smoothness result.

\section{Preliminaries}\label{sec:S2}
Let \( X, Y \) be two bounded open sets in \( \mathbb{R}^2 \) satisfying \( |X| = |Y| \). By Brenier's theorem \cite{Br1}, there exists a convex function \( w: \mathbb{R}^2 \rightarrow \mathbb{R} \), globally Lipschitz, such that
\[ (Dw)_\sharp \chi_{_X} = \chi_{_Y}, \quad \text{ with } Dw(x) \in \overline{Y} \ \text{ for a.e. } x \in \mathbb{R}^2 .\]
Let $w^*$ denote the Legendre transform of $w,$ namely,
\[w^*(y):=\sup_{x\in\mathbb{R}^2}\{y\cdot x-w(x)\}, \quad y\in\mathbb{R}^2.\]

We need the following definition of sublevel sets. Suppose $w: \mathbb{R}^2\rightarrow \mathbb{R}$ is a convex function, whose graph contains no infinite straight line.

\begin{definition}
  The subgradient of $w$ at $y_0$ is defined by
  \begin{equation*}
    \partial^{-} w (y_0) = \{p\in\mathbb{R}^2 : w(y) \geq w(y_0) + (y-y_0)\cdot p \quad\forall\,y\in\mathbb{R}^2\},
  \end{equation*}
  and
  \begin{equation*}
    \partial^{-} w (E) = {\cup}_{y\in E} \partial^{-} w (y)  \quad \forall\, E\subset\mathbb{R}^2.
  \end{equation*}
\end{definition}

For any domain $D\subset \mathbb{R}^2$ and any non-negative function $f$ defined on $D,$
we say
\[\det\, D^2w=(\geq, \leq)\, f\quad\text{ in }D\]
in the Alexandrov sense, if
\[|\partial^-w(E)|=(\geq, \leq)\int_E f \quad \forall\, E\subset D.\]

\begin{definition}\label{def:defS}
  Given a point $y_0\in \mathbb{R}^2$ and a small constant $h>0$, we define the centered sublevel set of $w$ at $y_0$ with height $h$ as
  \begin{equation*}
    S^c_{h}[w](y_0) := \left\{y\in\mathbb{R}^2 : w(y) < w(y_0) + (y-y_0)\cdot \bar{p} + h\right\},
  \end{equation*}
  where $\bar{p}\in \mathbb{R}^2$ is chosen such that the center of mass of $S^c_{h}[w](y_0)$ is $y_0$.
\end{definition}

\begin{remark}
  If $w(0)=0$ and $w\geq 0$, by \cite[Remark 2.2]{CLW3}, we have that
  \begin{equation}\label{eq:c0est}
    w \leq Ch \quad \text{in}\ S^c_h[w](0),
  \end{equation}
  where $C>0$ depends only on the dimension $n.$
  By \cite[Lemma 2.1]{C96},
  $S^c_h[w](0)$ is balanced with respect to $0,$ namely,
  \begin{equation} \label{eq:balance1}
    y\in S^c_h[w](0)\quad \Longrightarrow \quad -Cy\in S^c_h[w](0)
  \end{equation}
  for a constant $C>0$ depending only on the dimension  $n.$
\end{remark}

\section{Singularities in optimal transportation}\label{sec:S3}

Suppose that \(\Omega, \Omega^*\) satisfy the hypotheses of Theorem \ref{thm:t111}.
Let the vertices of $\Omega^*$ be $\{b_i\in\mathbb{R}^2,\ i=1,\dots, m\}$, and let its edges be $[b_ib_{i+1}]$, with $b_{m+1}:=b_1$.
We use $[pq]$ to denote the closed segment joining $p$ and $q$, and we use $(pq)$ to denote the relative interior of $[pq]$. We also write $(pq]:=(pq)\cup \{q\}$. We call a vertex $b_i$ concave if  $B_r(b_i)\setminus\Omega^*$ is convex
for $r>0$ sufficiently small.

Let $u$ be the globally Lipschitz convex function satisfying \eqref{eq:bre1}. Similarly, by \cite{Br1} there exists
a globally Lipschitz convex function $v$ satisfying
\begin{equation*}
  (Dv)_\sharp \chi_{_{\Omega^*}}= \chi_{_{\Omega}},
\end{equation*}
with $Dv(y) \in \overline{\Omega}$ for almost every $y \in \mathbb{R}^2$.
Note that $v^*=u$ in $\Omega.$

By \eqref{eq:bre1}, the function \( u \) satisfies \(\det\, D^2 u \geq \chi_{_\Omega}\) in the Alexandrov sense. In dimension 2, it is well known that \( u \) is strictly convex in \(\Omega\), see, for instance, \cite{C93}, \cite[Theorem 2.5]{LW}, or \cite[Lemma 2.3]{Mooney}.
Note that for any \( x \in \Omega \), \( \partial^- u(x) \) is a bounded closed convex set.
Since \( v^* = u \) in \(\Omega\), we have
\begin{equation*}
  \partial^- u(x)=\{y\in\mathbb{R}^2: v(y)=x \cdot (y-p)+v(p)\} \quad \forall\, p\in \partial^- u(x),
\end{equation*}
namely, on the set $\partial^- u(x)$, $v$ agrees with an affine function that has gradient $x$.
By the strict convexity of $u$ in $\Omega$, we then have
\begin{equation}\label{eq:dif1}
  v \text{ is differentiable at any point } y \in \partial^{-} u(\Omega).
\end{equation}

Let
\[ \Omega' := \{x \in \Omega : \partial^- u(x) \cap \Omega^* \neq \emptyset\}. \]
We record the following observations.

\noindent $(i)$ \emph{The singular set $A\subset \Omega \setminus \Omega'$ and $|\Omega \setminus \Omega'|=0$.}

From the discussion following \cite[Proposition 2]{FK1}, \(\Omega'\) is an open set with \(|\Omega \setminus \Omega'| = 0\), and furthermore, \( u \) is strictly convex and smooth inside \(\Omega'\). Consequently, the singular set $A$ defined in \eqref{eq:defA} satisfies
\( A \subset \Omega \setminus \Omega'.\) \qed

\noindent $(ii)$ \( \forall\, x \in \Omega \setminus \Omega' \),
\begin{equation} \label{eq:obs2}
  \text{ext}(\partial^- u(x))\subset\partial\Omega^*.
\end{equation}
Indeed, given any \( x \in \Omega \setminus \Omega' \), we have \( \partial^- u(x) \cap \Omega^* = \emptyset \). By the discussion preceding \cite[Proposition 2]{FK1}, we have that \(\text{ext}(\partial^- u(x)) \subset \partial \Omega^*\).
\noindent $(iii)$ \emph{ A localisation property, \eqref{eq:loc1}.}

Let \( x_0 \in A \). Fix any  \( p \in  \text{ext}(\partial^- u(x_0)).\)
Suppose  \(p\in [b_{i_0} b_{i_0+1}] \) for some $i_0=1,\dots, m$.
By \eqref{eq:dif1}, \( v \) is differentiable at \( p \), and one can choose \( r_0 \) small enough such that
\begin{equation}\label{eq:r0small1}
  \partial^- v(B_{r_0}(p)\cap \Omega^*) \subset \Omega
\end{equation}
and that
\begin{equation}\label{eq:r0small}
  r_0 < \min\big\{\text{dist}(p, [b_i b_{i+1}]): 1 \leq i \leq m, \ p \notin [b_i b_{i+1}]\big\}.
\end{equation}

Arguing as in \cite[Lemma 3.3]{CL3}, we have the following localisation lemma.
\begin{lemma}\label{lem:loclem} Let $x_0, p, r_0$ be as above.
  Then, there exist positive constants \( h_0 \) and \( \delta_0\in (0, 1) \) such that
  \begin{equation}\label{eq:loc1}
    S_h^c[v](y) \subset B_{r_0}(p) \quad \forall\, y \in B_{\delta_0}(p) \cap \overline{\Omega^*},\ \ \forall\, h \leq h_0.
  \end{equation}
\end{lemma}
\begin{proof}
  By translating the coordinates and subtracting a constant, we may assume that \( x_0 = 0 \) and \( v \geq 0 \), with \( v(p) = 0 \). Then, \( \partial^- u(0) = \{ v = 0 \} \).

  Suppose, to the contrary, that there are no \( h_0 \) and \( \delta_0\in (0, 1) \) for which \eqref{eq:loc1} holds. Then there are sequences \( y_k \in B_{\frac{1}{k}}(p) \) and \( h_k \leq \frac{1}{k} \) such that \( S_{h_k}^c[v](y_k) \subset B_{r_0}(p) \) fails.

  By the contradiction assumption and \eqref{eq:balance1}, we have that \( I_k \subset S_{h_k}^c[v](y_k) \) for some segment \( I_k \) centered at \( y_k \), satisfying \( |I_k| \geq C_1 \) for some constant \( C_1 > 0 \) independent of \( k \). Note that \( h_k \to 0 \) and \( y_k \to p \) as \( k \to \infty \).

  Up to a subsequence, we may assume that \( I_k \) converges to \( I \), a segment centered at \( p \) with length at least \( C_1 \). By \eqref{eq:c0est}, we have that \( v \leq C_2 h_k \) in \( S_{h_k}^c[v](y_k) \) for some constant \( C_2 \) independent of \( k \). It follows that \( v \leq C_2 h_k \) on \( I_k \), and passing to the limit, we have that \( 0 \leq v \leq 0 \) on \( I \). Hence, \( I \subset \{ v = 0 \} \), contradicting the fact that \( p \) is an extreme point of \( \{ v = 0 \} \).
  Therefore, there must exist \(h_0\) and \( \delta_0\in (0, 1) \) such that \eqref{eq:loc1} holds.
\end{proof}

\begin{lemma} \label{lem:locloc}
  Let $x_0\in A.$ For any $q \in  \partial^- u(x_0)\cap \partial \Omega^*,$ there exists $0<r$ such that the function \( v \) satisfies
  \begin{equation*}
    \det\, D^2 v = \chi_{_{B_{r}(q) \cap \Omega^*}}\quad \text{ in } B_{r}(q),
  \end{equation*}
  in the Alexandrov sense.
\end{lemma}

\begin{proof}
  By \eqref{eq:dif1}, \( v \) is differentiable at \( q \), and one can choose \( r \) small enough such that  \( \partial^-v(y) \subset \Omega \) for any
  \( y \in B_{r}(q) \cap \Omega^* \). Hence, according to \cite[Theorem 3.3]{FK1},
  \( v \) is strictly convex and smooth in \( B_{r}(q) \cap \Omega^* \), which implies
  \begin{equation}\label{eq:ale1}
    \det\, D^2v(y) = 1\quad \forall\, y \in B_{r}(q) \cap \Omega^*.
  \end{equation}
  On the other hand, for \( y \in B_{r}(q) \setminus \Omega^* \), let \( x \in  \partial^-v(y)\subset\Omega \).
  It follows that \( y \in \partial^-u(x) \). If \( x \in \Omega' \), since \( u \) is strictly convex and smooth in \( \Omega' \),
  we have \( y=Du(x) \in \Omega^* \), contradicting \( y \in B_{r}(q) \setminus \Omega^* \). Therefore, \( x \) must be in \( \Omega \setminus \Omega' \).
  This implies that
  \begin{equation}\label{eq:ale2}
    |\partial^-v(B_{r}(q) \setminus \Omega^*)| = 0.
  \end{equation}
  By combining \eqref{eq:ale1} and \eqref{eq:ale2}, it follows that
  \( \det\, D^2 v = \chi_{_{B_{r}(q) \cap \Omega^*}} \) within \( B_{r}(q) \), in the Alexandrov sense.
\end{proof}

\begin{lemma}\label{lem:extonly1}
  For any \(x_0 \in A\), \(\partial^- u(x_0) \cap \partial \Omega^* = \text{ext}(\partial^- u(x_0)).\)
\end{lemma}

\begin{proof}
  Suppose, for the sake of contradiction, that there exists \(q \in \partial^- u(x_0) \cap \partial \Omega^*\) that is not an extreme point of \(\partial^- u(x_0)\). By \eqref{eq:dif1}, we have that \(v\) is differentiable at \(q\). Take a sequence \(r_k \rightarrow 0\). Since $Dv(q)=x_0\in\Omega$ and
  \(v\) is convex and differentiable at \(q\), we have that \(\partial^-v\left(B_{r_k}(q)\right) \subset \Omega\) for large \(k\) and that it converges to \(x_0\) as \(k \rightarrow \infty\) in Hausdorff distance. By Lemma \ref{lem:locloc}, we have that \(\partial^- v\left(B_{r_k}(q)\right)\) has positive Lebesgue measure. Hence, we can find an \(x_k \in \partial^- v\left(B_{r_k}(q)\right)\) such that \(u\) is differentiable at \(x_k\). Note that \(x_k \rightarrow x_0\) as \(k \rightarrow \infty\). On one hand, by the convexity of \(u\), passing to a subsequence, we find that \(Du(x_k) \) converges to an extreme point of \(\partial^- u(x_0)\). Denote \(q_0 := \lim_{k \rightarrow \infty} Du(x_k)\).
  On the other hand, since \(Du(x_k) \in B_{r_k}(q)\), we have that \(q_0 = q \notin \text{ext}(\partial^- u(x_0))\), which is a contradiction.
\end{proof}

It follows from the above lemma that
\begin{corollary}\label{cor:sconvex1}
  For any $x_0\in A$ and any $i\in\{1, \dots, m\}$,
  $\text{ext}(\partial^- u(x_0)) \cap [b_ib_{i+1}]$ is either empty or a singleton.
\end{corollary}
\begin{proof}
  Suppose to the contrary that, for some $i\in\{1, \dots, m\}$, $\text{ext}(\partial^- u(x_0)) \cap [b_ib_{i+1}]$
  contains at least two distinct points, denoted by \(q_1\) and \(q_2\). Then the open segment \((q_1q_2) \subset \partial^- u(x_0)\cap \partial\Omega^*\), which contradicts Lemma \ref{lem:extonly1}.
\end{proof}

\begin{remark}
  It follows from Corollary \ref{cor:sconvex1} and \eqref{eq:obs2} that for any \( x \in A \), \( \partial^- u(x) \) must be a closed convex polygon, which may also be a one-dimensional segment.
\end{remark}

We now divide the singular set $A$ into two parts based on whether the set $\partial^- u(x)$ contains a vertex of $\Omega^*$.
Let
\[ \Sigma_1 := \{x\in A : b_i\in \partial^- u(x) \text{ for some } i=1,\dots,m\},\quad \text{ and }\ \Sigma_2 :=A\setminus \Sigma_1. \]

For any \( x \in \Sigma_2 \), Corollary \ref{cor:sconvex1} and \eqref{eq:obs2} imply that the extreme points of \(\partial^- u(x)\) lie in the relative interiors of the edges of \(\Omega^*\).

\begin{lemma}\label{lem:finite}
  $\Sigma_1$ consists of finitely many points.
\end{lemma}

\begin{proof}
  Given any \( x \in \Sigma_1 \), by \eqref{eq:dif1}, \( v \) is differentiable at any \( y \in \partial^- u(x) \). Hence, \( x = Dv(b_i) \) for some \( i=1,\dots,m \). This implies that \( \Sigma_1 \) is a finite set with cardinality at most $m$.
\end{proof}

Next, we decompose \(\Sigma_2\) into \(\Sigma_2 = \Sigma'_2 \cup \Sigma''_2\) such that
\[ \Sigma'_2 := \{ x\in \Sigma_2 : \partial^- u(x) \text{ touches exactly two open edges of } \Omega^*\}, \text{ and } \Sigma''_2 = \Sigma_2 \setminus \Sigma'_2. \]

\begin{lemma}\label{lem:fini222}
  \(\Sigma''_2\) consists of finitely many points.
\end{lemma}

\begin{proof}
  Let \(x \in \Sigma''_2\). By the definition of \(\Sigma''_2\) and Corollary \ref{cor:sconvex1}, \(\partial^-u(x)\) must touch at least three open edges of \(\Omega^*\), say, at \(p_x^1 \in (b_{i_1}b_{i_1+1})\), \(p_x^2 \in (b_{i_2}b_{i_2+1})\), and \(p_x^3 \in (b_{i_3}b_{i_3+1})\), respectively. Then \(v = \ell\) for some affine function \(\ell\) in the open triangle \(\triangle_x\) with vertices \(p_x^k\), \(k=1,2,3\). Note that \(D\ell = x\).

  Let \(z\) be another point in \(\Sigma''_2\), and suppose \(\partial^-u(z)\) touches the same three open edges of \(\Omega^*\), at \(\bar{p}_z^1 \in (b_{i_1}b_{i_1+1})\), \(\bar{p}_z^2 \in (b_{i_2}b_{i_2+1})\), and \(\bar{p}_z^3 \in (b_{i_3}b_{i_3+1})\), respectively. Then \(v = \ell'\) for some affine function \(\ell'\) in the open triangle \(\triangle'_z\) with vertices \(\bar{p}_z^k\), \(k=1,2,3\). Note that \(D\ell' = z \neq x\).

  The planar geometry forces \(\triangle'_z\) to intersect \(\triangle_x\), contradicting the differentiability of \(v\) in \(\triangle'_z \cap \triangle_x\); see \eqref{eq:dif1}. Hence \(x = z\).
  The above discussion implies that \(\Sigma''_2\) is a finite set with cardinality at most \(\frac{m(m-1)(m-2)}{6}\).
\end{proof}

\begin{figure}[htbp]
  \centering
  \includegraphics[width=4in,keepaspectratio]{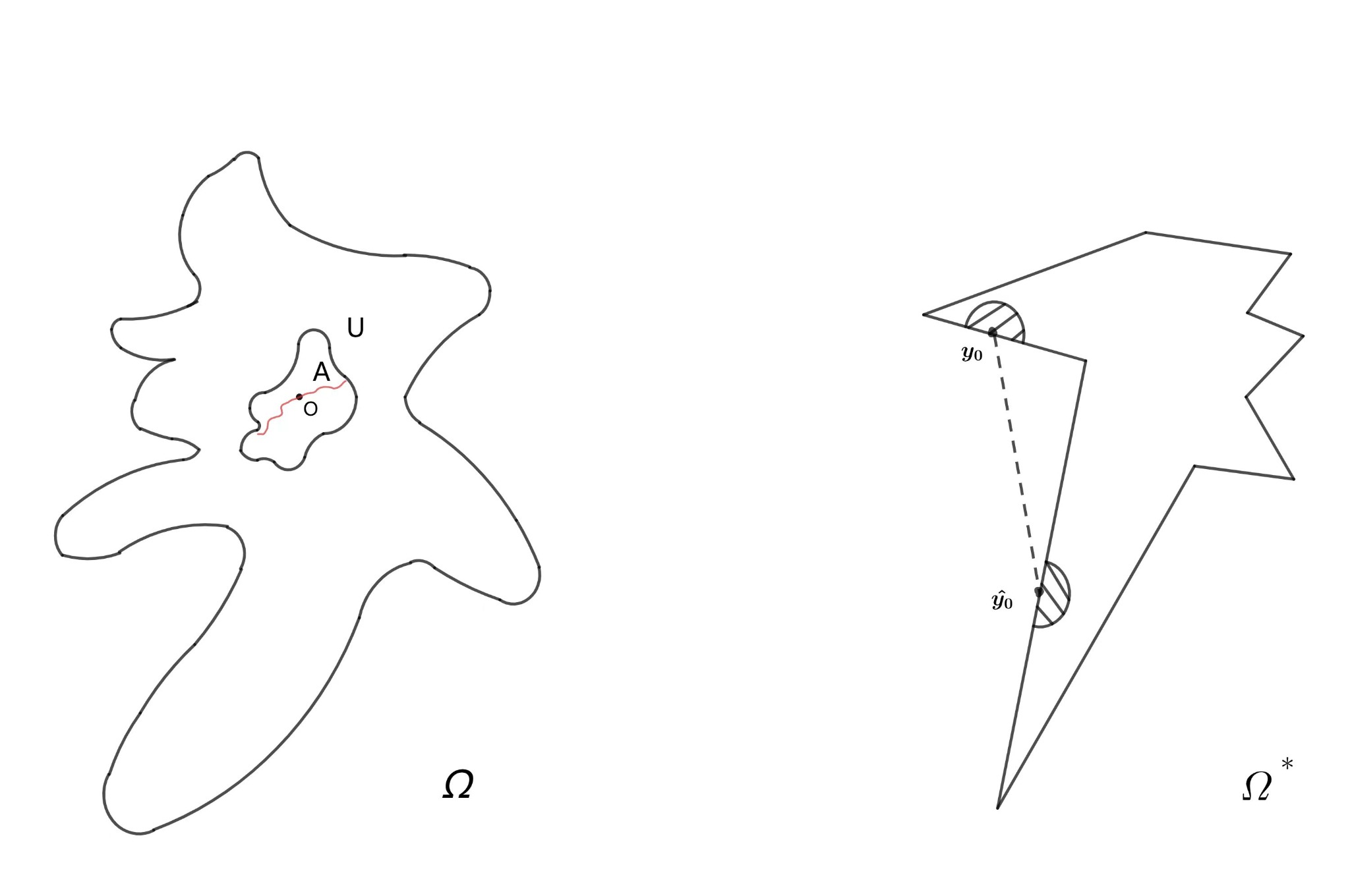}
  \caption{A local configuration for the singular set near two target edges.}
  \label{fig:singular-segment}
\end{figure}

\begin{proposition}\label{prop:propmain}
  For any \(x \in \Sigma'_2\), there exists a small constant \(r_x\) such that \(B_{r_x}(x) \cap A\) is a smooth curve.
\end{proposition}

\begin{proof}
  Let \(x_0 \in \Sigma'_2\). After translating coordinates, we may assume \(x_0 = 0\). By the definition of \(\Sigma'_2\), \(\partial^-u(0)\) is a segment with endpoints on two open edges of \(\Omega^*\), denoted by \((b_{i_1}b_{i_1+1})\) and \((b_{i_2}b_{i_2+1})\). Let \(y_0 := \partial^-u(0) \cap (b_{i_1}b_{i_1+1})\) and \(\hat{y}_0 := \partial^-u(0) \cap (b_{i_2}b_{i_2+1})\); see Figure~\ref{fig:singular-segment}.

  Denote \(V_1 := B_r(y_0) \cap \overline{\Omega^*}\) and \(V_2 := B_r(\hat{y}_0) \cap \overline{\Omega^*}\).
  By taking \(r>0\) sufficiently small, we have that \(V_1\) and \(V_2\) are convex and \(\text{dist}(V_1, V_2) > 0\).
  Denote \(U_i := \partial^- v(V_i)\) for \(i = 1, 2\).

  By Lemma \ref{lem:loclem}, there exist constants \( h_0 > 0 \) and \( \delta_0 > 0 \) such that
  \begin{equation}\label{eq:locc11}
    S_h^c[v](y_0) \subset B_r(y_0) \quad \forall\, y \in B_{\delta_0}(y_0) \cap \overline{V_1},\ \ \forall\, h \leq h_0.
  \end{equation}
  Then, by Lemma \ref{lem:locloc}, we have
  \begin{equation}\label{eq:alx11}
    \det D^2 v = \chi_{_{V_1}} \quad \text{in } B_r(y_0).
  \end{equation}
  Now, by \eqref{eq:locc11} and \eqref{eq:alx11}, we can invoke the proof of \cite[Theorem 7.13]{CM} to obtain quantitative uniform convexity of \( v \) within \( B_{\delta_0}(y_0) \cap \overline{V_1} \). This implies that for any two points \( y, \tilde{y} \in B_{\delta_0}(y_0) \cap \overline{V_1} \), the following inequality holds:
  \begin{equation}\label{eq:quantc1}
    |Dv(y) - Dv(\tilde{y})| \geq C |y - \tilde{y}|^a
  \end{equation}
  for some constants \( a > 2 \) and \( C > 0 \).
  Similarly, $v$ is also strictly convex in $B_{\delta_0}(\hat y_0) \cap \overline{V_2}$ for $\delta_0$ small.
  It follows that
  \[ B_{r_1}(0) \subset U := U_1 \cup U_2 \]
  for some sufficiently small \(r_1 > 0\).
  Denote \(V = V_1 \cup V_2\). We have that \((Du)_{\sharp}\chi_{_U} = \chi_{_V}\).

  To prove the \(C^\infty\) regularity of \(A\), we follow the methods developed in \cite{CL3,CLW3}.

  Let \(u_i(x) = \sup_{y \in V_i}\{y \cdot x - v(y)\}\) for \(x \in \mathbb{R}^2\), \(i = 1, 2\).
  Then \(u_i = u\) in \(U_i\), \(i = 1, 2\).
  The singular set of \(u\) in \(B_{r_1}(0)\) is characterized
  by
  \begin{equation}\label{eq:levdef}
    A \cap B_{r_1}(0) = \{u_1 = u_2\} \cap B_{r_1}(0) = \partial U_i \cap B_{r_1}(0), \ i = 1, 2.
  \end{equation}

  Since \( u_1^* = v \) in \( V_1 \), by \eqref{eq:quantc1}
  we can apply the result of \cite[Remark 7.10]{CM} to deduce that \( u_1 \in C^{1,\beta}(B_{r_2}(0) \cap \overline{U_1}) \) for some \( \beta \in (0, 1) \) and \( r_2 > 0 \). Similarly, we can show that \( u_2 \in C^{1,\beta}(B_{r_3}(0) \cap \overline{U_2}) \) for some \( \beta \in (0, 1) \) and \( r_3 > 0 \). It follows from \eqref{eq:levdef} and the implicit function theorem that \( A \) is \( C^{1,\beta} \) near \( x_0 \).

  From \eqref{eq:levdef} it also follows that the unit inner normal of \(U_2\) (which is also the unit normal of \(A\)) at \(x \in A \cap B_{r_1}(0)\) is given by
  \begin{equation}\label{eq:normf}
    \nu_{_A}(x) = \frac{Du_2(x) - Du_1(x)}{|Du_2(x) - Du_1(x)|}.
  \end{equation}
  Denote $y = Du_1(x)$, $\hat y = Du_2(x)$. By Corollary \ref{cor:sconvex1}, the segment \( y\hat y \) touches
  \((b_{i_1}b_{i_1+1})\) and \((b_{i_2}b_{i_2+1})\) only at \( y \) and \( \hat y \), respectively. It follows that
  \(\nu_{_A}(x) = \frac{\hat y - y}{|\hat y - y|}\) satisfies
  \begin{equation}\label{eq:oblique1}
    -\nu_{_A}(x) \cdot \nu_{_{V_1}}(y) > 0,\qquad  \nu_{_A}(x) \cdot \nu_{_{V_2}}(\hat y) > 0,
  \end{equation}
  where \(\nu_{_{V_1}}(y)\) and \(\nu_{_{V_2}}(\hat y)\) are the unit inner normals of \(V_1\) and \(V_2\) at \(y\) and \(\hat y\), respectively.

  Note that \( u_i\), \(i=1, 2 \), satisfy the following conditions:
  \begin{equation}\label{eq:uieq}
    \left\{
    \begin{array}{rl}
      \det\, D^2 u_i \!\! & = \ \chi_{_{U_i}} \quad \text{in } \mathbb{R}^2, \\
      D u_i(U_i) \!\!     & = \ V_i.
    \end{array}
    \right.
  \end{equation}
  Moreover, \( v \) satisfies
  \begin{equation}\label{eq:veq11}
    \det\, D^2 v = \chi_{_{B_{r}(y_0) \cap V_1}} \ \text{in}\ B_{r}(y_0)
  \end{equation}
  and
  \begin{equation*}
    \det\, D^2 v = \chi_{_{B_{r}(\hat y_0) \cap V_2}} \ \text{in}\ B_{r}(\hat y_0).
  \end{equation*}

  By \eqref{eq:oblique1}, up to an affine transformation (see \cite[(3.2)]{CLW3}), we may assume \(\nu_{_{V_1}}(y) = -\nu_{_A}(x) = e_2\).
  By \eqref{eq:uieq} and \eqref{eq:veq11}, we can apply Proposition \ref{prop:aprop1} $(i)$ to obtain that \( u_1 \) is \( C^{1, 1-\epsilon} \) along \( \partial U_1 \) near 0, for any $\epsilon>0$. Similarly, \( u_2 \) is \( C^{1, 1-\epsilon} \) along \( \partial U_2 \) near 0.
  Then, by \eqref{eq:normf}, we have that \( A \), and hence \( \partial U_i \) for \( i = 1, 2 \), is \( C^{1, 1-\epsilon} \) near 0. Finally, by applying
  Proposition \ref{prop:aprop1} $(ii)$, we conclude that each \( u_i \), \(i=1,2\), is \( C^{2,\alpha} \) in \( B_{\bar r}(0) \cap U_i \) for some small $\bar r>0$ and $\alpha\in(0,1)$. By a bootstrap argument and the Schauder estimate for uniformly elliptic equations \cite{GT}, we further conclude that each \( u_i \), \(i=1,2\), is smooth in \( B_{\bar r}(0) \cap U_i \). Hence, by \eqref{eq:normf}, \( A \) is smooth near 0.
\end{proof}

\begin{figure}[htbp]
  \centering
  \includegraphics[width=4in,keepaspectratio]{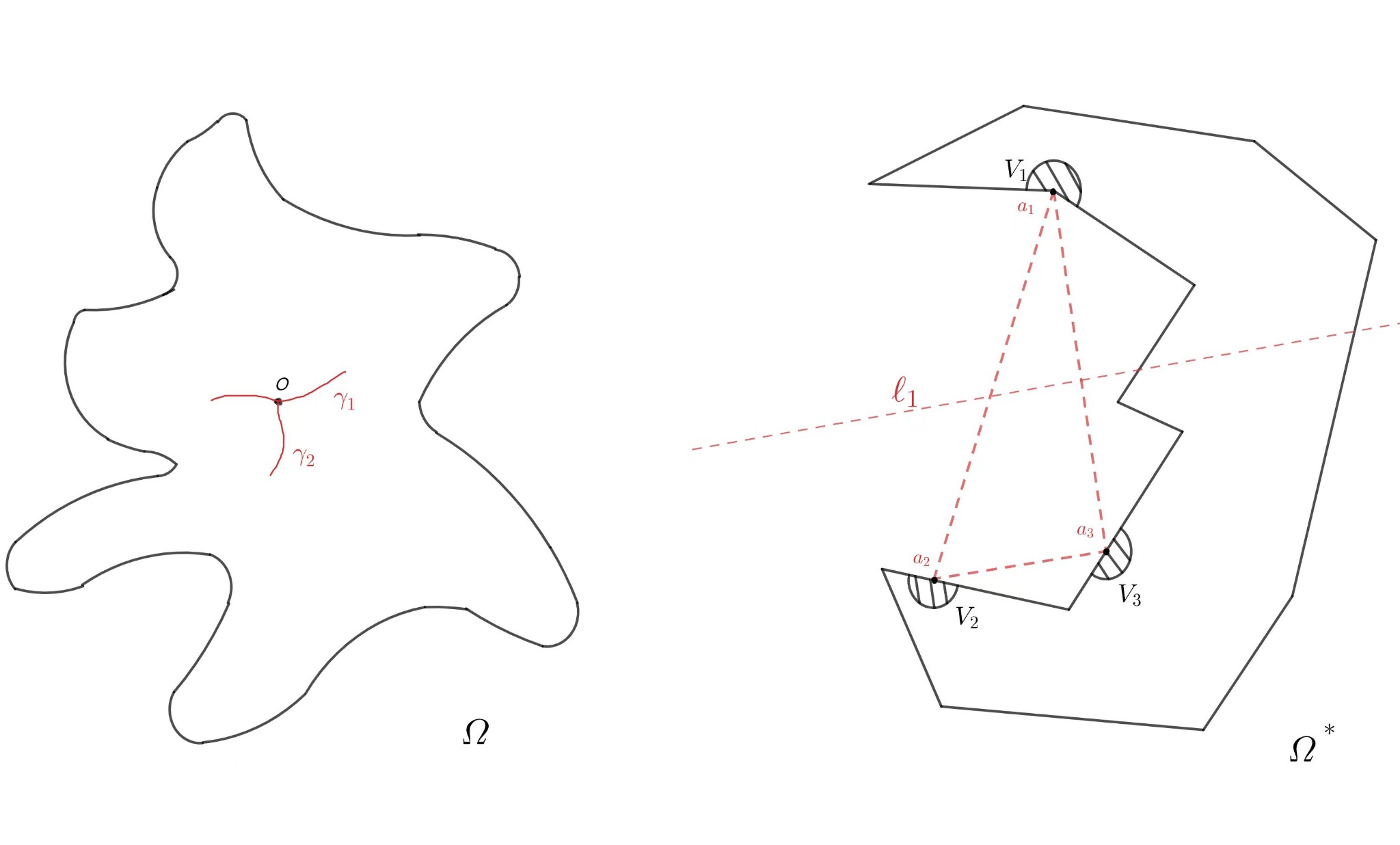}
  \caption{A local separation of one target component from the others.}
  \label{fig:separation}
\end{figure}

\begin{proposition}\label{prop:propmain1}
  For any \(x \in \Sigma_1 \cup \Sigma''_2\), there exists a small constant \(r_x\) such that either \(B_{r_x}(x) \cap A = \{x\}\), or
  \(B_{r_x}(x) \cap A\) is a union of finitely many Lipschitz curves.
\end{proposition}

\begin{proof}
  Let \(x_0 \in \Sigma_1 \cup \Sigma''_2\).
  We may assume that \(B_{\delta}(x_0) \cap A \setminus \{x_0\}\ne \emptyset\) for any $\delta>0,$ as otherwise $x_0$ would be an isolated singularity, and the proof is done.
  By Lemma \ref{lem:extonly1} and Corollary \ref{cor:sconvex1},  we have that \(\partial^-u(x_0) \cap \partial \Omega^*\)
  consists of finitely many points \(a_i\), \(i = 1, \dots, l\).

  After translating coordinates, we may assume \(x_0 = 0\). By subtracting a constant, we may also assume \(v = 0\) on \(\partial^-u(0)\) and \(v \geq 0\) on \(\mathbb{R}^2\).
  For $r_1>0$, define
  \[V_i := B_{r_1}(a_i) \cap \overline{\Omega^*},\quad i = 1, \dots, l,\qquad
    V := \cup_{i=1}^l V_i. \]
  Take \(r_1\) small enough such that \(V_i\), \(i = 1, \dots, l\), are disjoint.
  From \eqref{eq:dif1}, \(v\) is differentiable at any \(y \in \partial^-u(0)\).
  By the continuity of the subdifferential of a convex function,
  we have \(\partial^-v(V_i) \subset \Omega\), \(i = 1, \dots, l\), provided \(r_1\) is sufficiently small.
  Let \(U := \cup_{i=1}^l \partial^-v(V_i)\).
  Then \((Du)_\sharp \chi_{_U} = \chi_{_V}\).

  Since \(v(y) = 0\) only if \(y \in \partial^-u(0)\), by convexity, there exists a constant \(\eta > 0\) such that
  for any \(y \in \Omega^* \setminus V\), if \(z \in \partial^-v(y)\) then \(|z| > \eta\).
  It follows that there exists a small positive \(r < \eta\) such that \(B_r(0) \subset U\).
  Note that there exists a straight line $\ell_i$ separating $V_i$ from $\cup_{j\ne i} V_j$,
  provided \(r_1\) is sufficiently small, see Figure~\ref{fig:separation}.
  By \cite[Proposition 5.1]{KM1} or \cite[Theorem 5.1]{CL1}, we have that \(B_r(0)\subset\Omega\) is divided into two open sets \(B'\) and \(B''\) by a Lipschitz curve \(\gamma_1\), satisfying that \(Du(B') \subset V_1\)
  for a.e. \(x \in B'\), and that \(Du(B'') \subset \cup_{i=2}^l V_i\) for a.e. \(x \in B''\). Note that \(\gamma_1 \subset A\).

  If \(a_1\) is a vertex of \(\partial \Omega^*\) where \(\Omega^*\) is locally concave, then
  \begin{equation}\label{eq:exdivide}
    \text{the segment } [a_1 a_2] \ \text{divides} \ B_{r_1}(a_1) \setminus \Omega^* \ \text{into two disjoint parts.}
  \end{equation}
  Suppose that there exists \(\tilde{x} \in A \cap B'\), then \(\tilde{x} \neq 0\) and
  \begin{equation}\label{eq:divide1}
    \text{the convex set } \partial^-u(\tilde{x}) \ \text{contains at least two extreme points.}
  \end{equation}
  Since \(Du(B') \subset V_1\) for a.e. \(x \in B'\), we have \(\partial^-u(\tilde{x}) \subset B_{r_1}(a_1)\). Since the extreme points of \(\partial^-u(\tilde{x})\) are on \(\partial \Omega^*\), by Lemma \ref{lem:extonly1}, Corollary
  \ref{cor:sconvex1}, \eqref{eq:exdivide}, and \eqref{eq:divide1}, we have that \(\partial^-u(\tilde{x}) \cap [a_1 a_2] \neq \emptyset\). It follows that \(\partial^-u(\tilde{x}) \cap \partial^-u(0) \neq \emptyset\). Taking any \(z \in \partial^-u(\tilde{x}) \cap \partial^-u(0)\),
  we have \(\tilde{x} = Dv(z) = 0\), which contradicts the assumption \(\tilde{x} \neq 0\). Hence, in this case, \(A \cap B' = \emptyset\).

  Otherwise, \(V_1 = B_{r_1}(a_1) \cap \Omega^*\) is a convex set, provided \(r_1\) is small enough.
  Since \(Du(B') \subset V_1\) for a.e. \(x \in B'\), we have that \(B' \subset U_1 := \partial^-v(V_1)\).
  Since \((Du)_\sharp \chi_{_{U_1}} = \chi_{_{V_1}}\), and the target domain \(V_1 \subset \mathbb{R}^2\) is convex,
  by Caffarelli's regularity theory \cite{C92}, we have that \(u\) is \(C^1\) in \(U_1\). Hence,
  \(u\) is \(C^1\) in \(B'\), which also implies that \(A \cap B' = \emptyset\).

  Repeating the above process, we can conclude that \(A \cap B_r(0)\) is a union of finitely many Lipschitz curves.
\end{proof}

Combining the above results, we can now prove Theorem \ref{thm:t111}.

\begin{proof}[Proof of Theorem \ref{thm:t111}]
  Since \(A = \Sigma_1 \cup \Sigma'_2 \cup \Sigma''_2\), Lemma \ref{lem:finite}, Lemma \ref{lem:fini222}, and Proposition \ref{prop:propmain} imply that the singular set \( A \) is either a finite set or, away from finitely many points, is locally a smooth one-dimensional curve. By Proposition \ref{prop:propmain1}, we have that near these exceptional points, \( A \) is a union of finitely many Lipschitz curves.
\end{proof}

\begin{remark}
  The argument developed in this paper can also be used to study the problem in general dimensions when the target is the union of two bounded convex domains, which may overlap. Indeed, by combining the methods developed in \cite{CL3}, we obtain the following result. Let $\Omega\subset\mathbb{R}^n$ be a bounded domain, and let $\Omega^* = \Omega^*_1 \cup \Omega^*_2$ be the union of bounded convex domains $\Omega^*_1$ and $\Omega^*_2$ in $\mathbb{R}^n$. Let $u$ be a convex function solving $(Du)_\sharp \chi_{_\Omega}= \chi_{_{\Omega^*}}$, and let $A$ be the singular set of $u$ in $\Omega$. Then, for any $x\in A$, there exists a small constant $r_x>0$ such that $A\cap B_{r_x}(x)$ is an $(n-1)$-dimensional $C^{1,\alpha}$ submanifold of $\mathbb{R}^n$. Furthermore, if $\Omega_i^*$, $i=1, 2$, are uniformly convex and smooth, then $A\cap B_{r_x}(x)$ is smooth as well.
\end{remark}

\section{Free boundaries in optimal partial transport}\label{sec:S4}
Suppose \(\Omega\) is a bounded domain and \(\Omega^*\) is a non-convex polygonal domain in \(\mathbb{R}^2\), separated by a straight line \(L\). By a change of coordinates, we may assume that
\[ L = \{x = (x_1, x_2) \in \mathbb{R}^2 : x_1 = 0\}, \]
\(\Omega \subset \{x_1 < 0\}\), and \(\Omega^* \subset \{x_1 > 0\}\).
Suppose that the vertices of \(\Omega^*\) are given by \(b_i \in \mathbb{R}^2\), \(i = 1, \dots, m\), and the edges are given by \([b_i b_{i+1}]\), \(i = 1, \dots, m\), with \(b_{m+1} := b_1\).

For a fixed constant \(m\) satisfying \(m < \min\Big\{|\Omega|, \, |\Omega^*|\Big\}\), it is shown in \cite{CM} that the optimal transport plan \(\gamma\), namely, the minimizer of \eqref{eq:mini}, is characterized by:
\begin{equation*}
  \gamma = (\text{Id} \times T)_{\#}\chi_{_U}  = (T^{-1} \times \text{Id})_{\#}\chi_{_V},
\end{equation*}
where \(T\) is the optimal transport map from the active domain \(U \subset \Omega\) to the active target \(V \subset \Omega^*\).
Moreover, there exists a globally Lipschitz convex function \(v: \mathbb{R}^2 \rightarrow \mathbb{R}\) such that \(T^{-1} = Dv\), and
\begin{equation*}
  (Dv)_{\#}\left(\chi_{_V} + \chi_{_{\Omega \setminus U}}\right) = \chi_{_{\Omega}}
\end{equation*}
with \(Dv(y) \in \Omega\) for almost every \(y \in \mathbb{R}^2\), and
\begin{equation*}
  v(x) = \frac{1}{2}|x|^2 + C\quad \text{for } x \in \Omega \setminus U,
\end{equation*}
for some constant \(C\).

Denote \( \bar{u} = v^* \), the Legendre transform of \( v \). It is well known that \( (D\bar{u})_{\#}\chi_{_U} = \chi_{_V} \). It is also straightforward to check that
\begin{equation}\label{eq:main1}
  \det\, D^2\bar{u} \geq 1 \quad \text{in}\ \Omega
\end{equation}
in the sense of Alexandrov. Furthermore, it is well known that in dimension \(2\), if a convex function satisfies \eqref{eq:main1}, then it must be strictly convex in \( \Omega \).

It is also proved in \cite{CM} that the free boundary \(\mathcal{F} := \partial U \cap \Omega\) is the graph of a semiconvex function (hence a Lipschitz function) defined over the line \(L\).
Moreover, the interior ball property holds \cite[Corollary 2.4]{CM}, namely, for any \(x \in U\), if \(\bar{u}\) is differentiable at \(x\), then
\begin{equation}\label{eq:inball}
  B_{|x-y|}(x) \cap \Omega^* \subset V, \quad\text{ and }\quad B_{|x-y|}(y) \cap \Omega \subset U
\end{equation}
where \(y = D\bar{u}(x)\).

For any \(x \in \mathcal{F}\), note that \(\partial^-\bar{u}(x)\) is a closed convex set.
After translating coordinates, we may assume \(x = 0 \in \mathcal{F}\).
By adding a constant, we may also assume that \(v \geq 0\) on \(\mathbb{R}^2\), \(v = 0\) on
\(\partial^-\bar{u}(0)\), and
\begin{equation}\label{eq:qua1}
  \bar{u} = v = \frac{1}{2}|x|^2 \quad\text{ in } \ \ \Omega \setminus U.
\end{equation}
Since \(\bar{u}\) is strictly convex in \(\Omega\), and \(\bar{u} = v^*\), it follows that
\begin{equation} \label{eq:c1lemma1}
  v\ \text{is differentiable at any}\ y \in \partial^-\bar{u}(0).
\end{equation}

\begin{lemma}\label{lem:pos1}
  \(\partial^-\bar{u}(0)\) is a bounded convex polygonal set satisfying \(\partial^-\bar{u}(0) \cap \big((\Omega \setminus \overline{U}) \cup V\big) = \emptyset\) and
  \(\text{ext}(\partial^-\bar{u}(0)) \subset \{0\} \cup \partial V\).
\end{lemma}

\begin{proof}
  If \(\partial^-\bar{u}(0) \cap \big((\Omega \setminus \overline{U}) \cup V\big) \neq \emptyset\), by
  \cite[Theorem 3.3]{FK1}, we have that \(\bar{u}\) is differentiable at $0$, contradicting the fact that \(0 \in \mathcal{F}\), on which \(\bar{u}\) cannot be differentiable. Hence, we obtain \(\partial^-\bar{u}(0) \cap \big((\Omega \setminus \overline{U}) \cup V\big) = \emptyset\).

  Next, we show that \(\partial^-\bar{u}(0)\) is bounded. Suppose not, since $0\in\partial^-\bar{u}(0)$, by convexity of \(\partial^-\bar{u}(0)\), there exists $e \in \mathbb{S}^{1}$ such that the half-line \(\{te : t \geq 0\} \subset \partial^-\bar{u}(0),\) which implies
  \[ 0\in \partial^-v(te) \quad \forall\,t\geq 0. \]
  By the monotonicity of $\partial^-v$, for any \(y \in \mathbb{R}^2\) and \(x \in \partial^{-}v(y)\), we then have that
  \[ (x - 0) \cdot (y - te) \geq 0 \quad \forall\,t\geq 0.  \]
  Letting \(t \to\infty \), we obtain \(x \cdot e \leq 0\).
  Given the arbitrariness of \(y\), this implies \(\Omega \subset \partial^{-} v(\mathbb{R}^2) \subset \{x \cdot e \leq 0\}\), contradicting the fact that \(0 \in \Omega\). Therefore, \(\partial^-\bar{u}(0)\) must be bounded.

  Finally, by \eqref{eq:qua1} and the discussion above \cite[Proposition 2]{FK1}, we have that
  \(\text{ext}(\partial^-\bar{u}(0)) \subset \{0\} \cup \partial V\).
\end{proof}

Suppose \(0 \ne p \in \text{ext}(\partial^-\bar{u}(0))\). Then we can choose \(x_k \in \Omega\) converging to \(0\) such that \(\bar{u}\) is differentiable at \(x_k\) and \(D\bar{u}(x_k) \rightarrow p\) as \(k \rightarrow \infty\). Since \(p \in \partial V\) (by Lemma \ref{lem:pos1}) and \(Dv = \text{id}\) in \(\Omega \setminus U\), we have that \(x_k \in U\). By \eqref{eq:inball}, we have $B_{|x_k-D\bar u(x_k)|}(D\bar u(x_k)) \cap \Omega \subset U.$
Passing to the limit $k\rightarrow \infty,$ we have
\begin{equation}\label{eq:inball2}
  B_{|0-p|}(p) \cap \Omega \subset U.
\end{equation}

Using the interior ball property \eqref{eq:inball}, arguing as in \cite[Lemma 6.8]{CM}, we obtain the following lemma, which shows that the free boundary is never mapped to the free boundary.

\begin{lemma}\label{lem:intco}
  For every \(p \in \text{ext}(\partial^-\bar{u}(0)) \cap \partial V\), we have \(p \in \partial \Omega^*\) and
  \(B_{r_0}(p) \cap \Omega^* \subset V\) for some small \(r_0>0\).
\end{lemma}

\begin{proof}
  Let \(p_0 \in \text{ext}(\partial^-\bar{u}(0)) \cap \partial V\). By \eqref{eq:inball2}, we have
  \(x_t := t\frac{p_0}{|p_0|} \in U\) for \(t > 0\) small.
  Let \(\{x_k\} \subset U\) be a sequence of differentiable points of \(\bar{u}\) such that
  \(x_k \rightarrow x_t\) as \(k \rightarrow \infty\), then up to a subsequence
  \[ D\bar{u}(x_k) \rightarrow p_t \in \partial^-\bar{u}(x_t) \]
  for some \(p_t \in \overline{V}\) as \(k \rightarrow \infty\). By the strict convexity of \(\bar{u}\) in \(\Omega\), we have that \(p_0 \neq p_t\).

  By the interior ball property \eqref{eq:inball}, we have \(B_{|x_k - D\bar{u}(x_k)|}(x_k) \cap \Omega^* \subset V\) for each $k$.
  Letting $k\to\infty$, we obtain
  \begin{equation}\label{eq:mot1}
    B_{|x_t - p_t|}(x_t) \cap \Omega^* \subset V.
  \end{equation}
  By the monotonicity of \(\partial^-\bar{u}\), we have \((p_t - p_0) \cdot (x_t - 0) \geq 0\), which implies that
  \begin{equation}\label{eq:mot2}
    p_0 \in B_{|x_t - p_t|}(x_t).
  \end{equation}
  Hence, from \eqref{eq:mot1} and \eqref{eq:mot2} we obtain \(B_{r_0}(p_0) \cap \Omega^* \subset V\) for some small \(r_0>0\). Combining this with Lemma \ref{lem:pos1}, we have \(p_0 \in \partial \Omega^*\).
\end{proof}

We decompose \(\mathcal{F} = \mathcal{F}_1 \cup \mathcal{F}_2 \cup \mathcal{F}_3\),
where
\begin{align*}
  \mathcal{F}_1 & := \{x \in \mathcal{F} :  \partial^- \bar{u}(x) \text{ contains a vertex of }\Omega^* \},                                        \\
  \mathcal{F}_2 & := \{x \in \mathcal{F}\backslash \mathcal{F}_1 :  \partial^- \bar{u}(x) \text{ touches at least two open edges of } \Omega^* \}, \\
  \mathcal{F}_3 & := \{x \in \mathcal{F} :  \partial^- \bar{u}(x) \text{ touches exactly one open edge of } \Omega^*\}.
\end{align*}

\begin{figure}[htbp]
  \centering
  \includegraphics[width=4in,keepaspectratio]{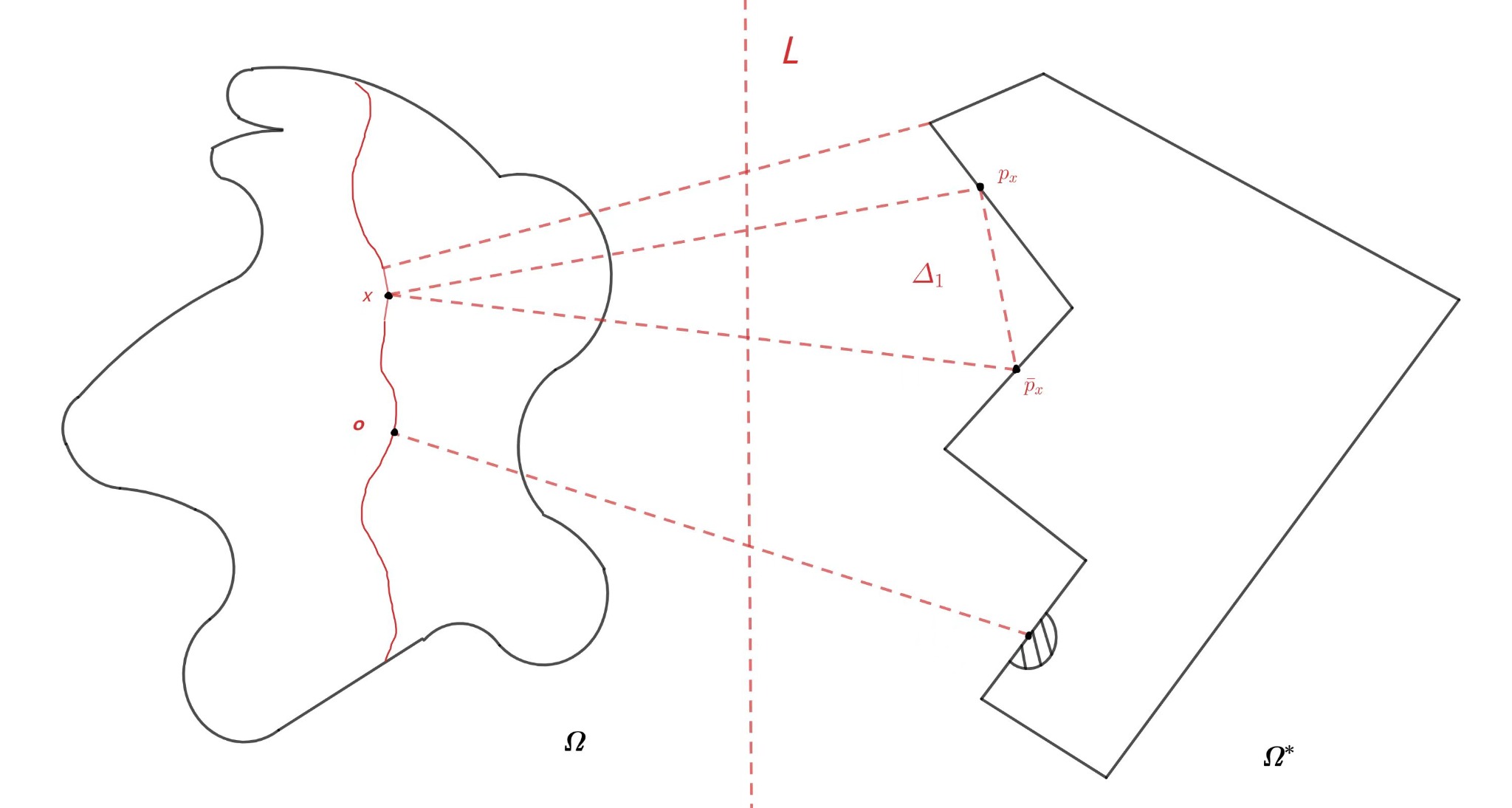}
  \caption{The triangle used in the proof that $\mathcal{F}_2$ is finite.}
  \label{fig:free-boundary-triangle}
\end{figure}

\begin{lemma}
  Both \(\mathcal{F}_1\) and \(\mathcal{F}_2\) consist of finitely many points.
\end{lemma}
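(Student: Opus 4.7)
\textbf{Plan for $\mathcal{F}_1$.} The strict convexity of $\bar{u}$ in $\Omega$ (which follows from the Monge--Amp\`ere lower bound \eqref{main1}) gives that distinct points of $\Omega$ have disjoint subdifferentials: if $p \in \partial^-\bar{u}(x) \cap \partial^-\bar{u}(z)$ with $x \ne z$ in $\Omega$, then $\bar{u}$ would be affine on $[x,z]$, a contradiction. Equivalently, for any $y \in \partial^-\bar{u}(\Omega)$ the dual set $\partial^-v(y)$ is a single point, so $v$ is differentiable there (this is the general version of \eqref{c1lemma1}). Hence whenever $x \in \mathcal{F}_1$ and $b_i \in \partial^-\bar{u}(x)$, the duality $x \in \partial^-v(b_i) = \{Dv(b_i)\}$ pins $x = Dv(b_i)$. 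With only $m$ vertices of $\Omega^*$ available as preimages, $|\mathcal{F}_1| \leq m$.

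\textbf{Plan for $\mathcal{F}_2$.} I argue by contradiction: assume $\mathcal{F}_2$ contains a sequence of distinct $x_k$. Pigeonhole over the finitely many ordered pairs of open edges of $\Omega^*$ lets me pass to a subsequence so that $\partial^-\bar{u}(x_k)$ touches the same pair $(e_1, e_2)$ at unique points $p^i_k \in e_i$ (uniqueness coming from the partial-transport analogue of Lemma \ref{sconvex1}, proved by the same balanced-section argument now applied to $v$ using the counterpart of Lemma \ref{locloc}). The triangles $T_k := \mathrm{conv}\{x_k, p^1_k, p^2_k\} \subset \partial^-\bar{u}(x_k)$ are then pairwise disjoint by the strict-convexity observation already used for $\mathcal{F}_1$.

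Extract a further subsequence so that $x_k \to x_\infty \in \overline{\Omega}$ and $p^i_k \to p^i_\infty \in \overline{e_i}$. Upper semicontinuity of the subdifferential gives $\{x_\infty, p^1_\infty, p^2_\infty\} \subset \partial^-\bar{u}(x_\infty)$. If some $p^i_\infty$ is a vertex of $\Omega^*$ then $x_\infty \in \mathcal{F}_1$, which is already finite, so discarding these finitely many possible accumulation points we may assume both $p^i_\infty$ sit in the open edges. In the generic configuration the limit triangle $T_\infty = \mathrm{conv}\{x_\infty, p^1_\infty, p^2_\infty\}$ has positive area, and Hausdorff convergence $T_k \to T_\infty$ puts any interior point of $T_\infty$ into all but finitely many $T_k$, contradicting pairwise disjointness.

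\textbf{Main obstacle.} The delicate case is the collinear degeneracy $x_\infty, p^1_\infty, p^2_\infty$ on one line (with $p^i_\infty$ still in the open edges), where $T_\infty$ collapses to a segment and the interior-point argument fails outright. I plan to exclude it by combining Lemmas \ref{pos1} and \ref{intco}: collinearity forces one of $p^1_\infty, p^2_\infty$ to lie in the relative interior of a face of $\partial^-\bar{u}(x_\infty)$ that meets $\partial\Omega^*$, and then the interior ball property \eqref{inball} together with $B_{r_0}(p) \cap \Omega^* \subset V$ from Lemma \ref{intco} should contradict $\partial^-\bar{u}(x_\infty) \cap V = \emptyset$ from Lemma \ref{pos1}. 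Making this collinear-case exclusion precise, alongside verifying the analogue of Lemma \ref{sconvex1} invoked at the outset, is the technical crux of the proof.
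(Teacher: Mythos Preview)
Your treatment of $\mathcal{F}_1$ is correct and coincides with the paper's.

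For $\mathcal{F}_2$, however, you are taking a much longer route than the paper and leaving a genuine gap (the collinear degeneracy you yourself flag as unresolved). The paper's argument is direct and uses neither compactness nor an analogue of Lemma~\ref{sconvex1}. The key piece of structure you never invoke is the separating line $L$ between $\Omega$ and $\Omega^*$, together with the fact (from $\bar u=\tfrac12|x|^2$ on $\Omega\setminus U$, see~\eqref{qua1}) that each free boundary point $x$ belongs to its own subdifferential $\partial^-\bar u(x)$. Thus $v$ is affine on the whole triangle $\triangle_x=\mathrm{conv}(x,p_x,\bar p_x)$, where $p_x\in e_i$, $\bar p_x\in e_j$ are \emph{any} touching points (no uniqueness needed). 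If a second point $z\in\mathcal{F}_2$ touches the same pair of open edges $(e_i,e_j)$ at $p_z,\bar p_z$, one obtains a second such triangle $\triangle_z$. Since $x,z$ lie on one side of $L$ and the opposite sides $[p_x\bar p_x],[p_z\bar p_z]$ both connect the fixed edges $e_i,e_j$ on the other side, the paper argues that $\triangle_x$ and $\triangle_z$ must intersect; this immediately contradicts the disjointness of $\partial^-\bar u(x)$ and $\partial^-\bar u(z)$ (equivalently, the differentiability of $v$ there, \eqref{c1lemma1}). Hence each unordered pair of edges accounts for at most one point of $\mathcal{F}_2$, giving $|\mathcal{F}_2|\le \binom{m}{2}$ with no limits and no degenerate case to handle.

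In short: drop the sequence/limit machinery and the appeal to a Lemma~\ref{sconvex1} analogue, and instead use the separating line $L$ plus $x\in\partial^-\bar u(x)$ to get a two-point contradiction directly. This is exactly the extra geometric leverage the partial transport hypothesis provides over the complete transport case of Lemma~\ref{fini222}.
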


\begin{proof}
  For any \( x \in \mathcal{F}_1 \), by the definition of \(\mathcal{F}_1\), there exists a vertex $b_i$ of \(\Omega^*\), \(1 \leq i \leq m\), such that \(Dv(b_i) = x\). Note that by \eqref{eq:c1lemma1}, \( v \) is differentiable at \( b_i \).
  This implies that \(\mathcal{F}_1\) is a finite set with cardinality at most \(m\).

  Let \( x \in \mathcal{F}_2 \). By the definition of \(\mathcal{F}_2\), there are two distinct integers $i, j$, \(1 \leq i \neq j \leq m\), such that \(\partial^- \bar{u}(x) \cap (b_i b_{i+1}) \neq \emptyset\) and \(\partial^- \bar{u}(x) \cap (b_j b_{j+1}) \neq \emptyset\). Denote
  \[ p_x \in \partial^- \bar{u}(x) \cap (b_i b_{i+1}) \quad\text{ and }\quad \bar{p}_x \in \partial^- \bar{u}(x) \cap (b_j b_{j+1}). \]
  Then \(v = \ell\) on the open triangle \(\triangle_1\) with vertices \(x\), \(p_x\), and \(\bar{p}_x\) (see Figure~\ref{fig:free-boundary-triangle}), where \(\ell\) is an affine function with \(D\ell = x\).

  If there exists another \(z \in \mathcal{F}_2\) such that \(\partial^- \bar{u}(z)\) touches the same open edges \((b_i b_{i+1})\) and \((b_j b_{j+1})\) at points \(p_z \in (b_i b_{i+1})\) and \(\bar{p}_z \in (b_j b_{j+1})\), then \(v=\ell'\) for another affine function with \(D\ell' = z\), on the open triangle \(\triangle_2\) with vertices \(z\), \(p_z\), and \(\bar{p}_z\).

  Since \(x, z \in \Omega\), which is separated from \(\Omega^*\) by the straight line \(L\), \(\triangle_1\) and \(\triangle_2\) must intersect, contradicting \eqref{eq:c1lemma1}, which says that \(v\) is differentiable in \(\triangle_1 \cap \triangle_2\). Therefore, different points in \(\mathcal{F}_2\) must correspond to different pairs of open edges of \(\Omega^*\), implying that \(\mathcal{F}_2\) is a finite set with cardinality at most \(\frac{m(m-1)}{2}\).
\end{proof}

It remains to consider the set \(\mathcal{F}_3\). Let \(x_0 \in \mathcal{F}_3\). After translating coordinates, we may assume \(x_0 = 0\).
By the definition of \(\mathcal{F}_3\), we have that \(\partial^- \bar{u}(0) \cap \partial V \subset (b_{i_0} b_{i_0+1})\) for some integer $i_0$, \(1 \leq i_0 \leq m\). Let \(p_0 \in \text{ext}(\partial^- \bar{u}(0)) \cap \partial V\).
Choose \(r_0\) small enough such that
\begin{equation*}
  r_0 < \min\{\text{dist}(p_0, [b_i b_{i+1}]): 1 \leq i \leq m, \ p_0 \notin [b_i b_{i+1}]\},
\end{equation*}
and that \(B_{r_0}(p_0) \cap \Omega^* \subset V\).
Similar to \eqref{eq:loc1}, there exist \(h_0, \delta_0\) such that
\begin{equation}\label{eq:loc333}
  S_h^c[v](y) \subset B_{r_0}(p_0) \quad \forall\, y \in B_{\delta_0}(p_0), \ \forall\, h \leq h_0.
\end{equation}

By the same proof as in Lemma \ref{lem:locloc}, we have the following lemma.
\begin{lemma} \label{lem:locloc2}
  For any $q \in  \partial^- u(0)\cap \partial \Omega^*,$ there exists $0<r$ such that the function \(v\) satisfies
  \begin{equation}\label{eq:alex112}
    \det\, D^2 v = \chi_{_{B_{r}(q) \cap \Omega^*}} \ \text{in}\ B_{r}(q) \ \text{in the Alexandrov sense}.
  \end{equation}
\end{lemma}

\begin{proof}[Proof of Theorem \ref{thm:t222}]
  By \eqref{eq:loc333} and \eqref{eq:alex112}, we can invoke the proof of \cite[Theorem 7.13]{CM} to obtain a quantitative uniform convexity of \( v \) within \( B_{\delta_0}(p_0) \cap \overline{\Omega^*} \). Specifically, this implies that for any two points \( y, \tilde{y} \in B_{\delta_0}(p_0) \cap \overline{\Omega^*} \), the following inequality holds:
  \begin{equation*}
    |Dv(y) - Dv(\tilde{y})| \geq C|y - \tilde{y}|^a
  \end{equation*}
  for some constants \( a > 2 \) and \( C>0 \).
  Since \(\bar{u} = v^*\), we can apply the result of \cite[Remark 7.10]{CM} to deduce that \(\bar{u} \in C^{1,\beta}(B_r(0) \cap \overline{U})\) for some \(\beta \in (0, 1)\) and \(r > 0\).

  By \eqref{eq:inball}, we have that the unit normal of \(\mathcal{F}\) at \(x\) is given by
  \[ \nu_{\mathcal{F}}(x) = \frac{Du(x) - x}{|Du(x) - x|}. \]
  Then, the \(C^{1, \beta}\) regularity of \(\bar{u}\) implies that \(\mathcal{F} \cap B_r(0)\) is \(C^{1,\beta}\), noting that $\bar u=u$ on $U$.

  Since \(0 \in \mathcal{F}_3\), the definition of \(\mathcal{F}_3\), together with the strict convexity of \(v\) in \(B_{\delta_0}(p_0) \cap \overline{\Omega^*}\), implies that \(\partial^- \bar{u}(0) \cap \partial V = \{p_0\}\).
  It follows that
  \[ \nu_{_U}(0) \cdot \nu_{_V}(p_0) > 0, \]
  where \(\nu_{_V}(p_0)\) is the unit inner normal of \(V\) at \(p_0\), and \(\nu_{_U}(0) = \nu_{\mathcal{F}}(0)\) is the unit inner normal of \(U\) at \(0\), namely, the obliqueness estimate holds at \(0\).

  Finally, similar to the proof of Proposition \ref{prop:propmain}, we can apply the arguments of \cite[Sections 3 and 4]{CLW3} to show that
  \(\bar{u} \in C^\infty(B_r(0) \cap \overline{U})\), which implies that \(\mathcal{F} \cap B_r(0)\) is smooth.
  Therefore, \(\mathcal{F}\) is smooth away from finitely many points in \(\mathcal{F}_1 \cup \mathcal{F}_2\).
\end{proof}

\begin{remark}
  We note that in Theorems \ref{thm:t111} and \ref{thm:t222}, \(\Omega^*\) does not need to be simply connected and may contain finitely many ``polygonal'' holes. The proofs of Theorems \ref{thm:t111} and \ref{thm:t222} are also valid for these more general cases. The method developed in this paper is also applicable when the edges of the target domain are replaced by smooth convex curves.
\end{remark}

\section{Higher dimensions}\label{sec:S5}
In this section, we discuss two conjectures in higher dimensions. The first one is for complete transport. Denote \(\Sigma = \overline{A} \cap \Omega.\)

\begin{conjecture}\label{conj:t1111}
  Suppose \(\Omega\) is a bounded domain and \(\Omega^*\) is a non-convex polytope in \(\mathbb{R}^n\) with \(|\Omega|=|\Omega^*|\). Suppose \( u \) is a convex solution to \eqref{eq:bre1}. Then, \(\Sigma\) is either empty or locally an \((n-1)\)-dimensional smooth hypersurface away from a set of Hausdorff dimension at most \(n-2\).
\end{conjecture}

Similar to the two-dimensional case, we may show that \(\partial^- u(x_0)\) is a bounded convex polytope with vertices on \(\partial\Omega^*\), provided \(x_0 \in A.\) We decompose \(\Sigma\) into three parts: \(\Sigma = \Sigma_{\text{reg}} \cup \Sigma_{\text{sin}} \cup \Sigma_{\text{cu}},\) where
\(x \in \Sigma_{\text{reg}}\) if and only if \(\partial^- u(x)\) is a segment with endpoints on two \((n-1)\)-dimensional facets of \(\Omega^*\), \(\Sigma_{\text{sin}} := A \setminus \Sigma_{\text{reg}},\) and \(\Sigma_{\text{cu}} := \Sigma \setminus (\Sigma_{\text{reg}} \cup \Sigma_{\text{sin}}).\) We also write \(\Sigma_{\text{sin}} = \Sigma'_{\text{sin}} \cup \Sigma''_{\text{sin}},\) where \(x \in \Sigma''_{\text{sin}}\) if and only if \(\partial^- u(x)\) is at least two-dimensional, and \(\Sigma'_{\text{sin}} = \Sigma_{\text{sin}} \setminus \Sigma''_{\text{sin}}.\)

The method developed in this paper can be applied to show that \(\Sigma_{\text{reg}}\) is locally a smooth hypersurface, and the Hausdorff dimension of \(\Sigma''_{\text{sin}}\) is at most \(n-2\). Hence, it remains to show that both \(\Sigma'_{\text{sin}}\) and \(\Sigma_{\text{cu}}\) have Hausdorff dimension at most \(n-2\).

Similarly, for optimal partial transport, we have the following conjecture in higher dimensions.

\begin{conjecture}\label{conj:t2222}
  Suppose \(\Omega\) is a bounded domain and \(\Omega^*\) is a non-convex polytope in \(\mathbb{R}^n\), separated from \(\Omega\) by a hyperplane. Let \(m < \min\Big\{|\Omega|, \, |\Omega^*|\Big\}\) be the mass transported. Then, the free boundary is locally smooth away from a set of Hausdorff dimension at most \(n-2\).
\end{conjecture}

\section{Appendix}\label{sec:S6}
Let \(U \subset \mathbb{R}^2\) be a bounded open set, and let \(V \subset \mathbb{R}^2\) be a convex domain, satisfying \(|U| = |V|\).
Suppose \(0 \in \partial U\) and
\begin{equation}\label{eq:betaf}
  \partial U \cap B_{r}(0)\ \text{is } C^{1,\beta}\text{-regular for some }\beta \in (0, 1).
\end{equation}
Suppose \(0 \in \partial V\) and \(\partial V \cap B_{r}(0)\) is a segment. Let \(e_1, e_2\) be the coordinate directions of the standard orthonormal basis of \(\mathbb{R}^2\).
Suppose \(\nu_{_U}(0) = \nu_{_V}(0) = e_2\), where \(\nu_{_U}(0)\) and \(\nu_{_V}(0)\) denote the unit inner normals of \(U\) and \(V\) at 0, respectively.

Let \(u\) be a convex function satisfying
\begin{equation*}
  \left\{
  \begin{array}{rl}
    \det\, D^2 u \!\! & = \ \chi_{_U} \quad \text{in } \mathbb{R}^2, \\
    Du(U) \!\!        & = \ V.
  \end{array}
  \right.
\end{equation*}
This implies that \((Du)_{\sharp}\chi_{_U} = \chi_{_V}\).

Let \(v\) be a convex function satisfying \((Dv)_{\sharp}\chi_{_V} = \chi_{_U}\). Note that \(v = u^*\) in \(V\).
Assume further that
\begin{equation}\label{eq:veq1}
  \det\, D^2 v = \chi_{_{B_r(0) \cap V}} \quad \text{in}\ B_r(0),
\end{equation}
and that \(Du(0) = Dv(0) = 0\), \(v(0) = u(0) = 0\). Suppose that
\[ S_h^c[v](y) \subset B_r(0) \quad \forall\, y \in B_{\delta_0}(0)\cap V,\ \ \forall\, h \leq h_0, \]
where \(h_0\) and \(\delta_0\) are small positive constants. The arguments developed in \cite[Sections 3 and 4]{CLW3} lead to the following useful proposition. For the reader's convenience, we will sketch the proof here.

\begin{proposition}\label{prop:aprop1}
  Suppose \(u, v, U, V\) satisfy all the conditions listed above. Then,\\
  (i) \(u(x) \leq C_\epsilon |x|^{2-\epsilon}\) for \(x \in \overline{U}\), \(\forall\,\epsilon>0\) \\
  (ii) \(u \in C^{2, \alpha}(B_{\bar{r}}(0) \cap \overline{U})\) for some \(\alpha \in (0, 1)\) and \(\bar{r} > 0\) small,
  provided \(\beta\) (appearing in \eqref{eq:betaf}) is sufficiently close to 1.
\end{proposition}

\begin{proof}
  Since \(\det\, D^2 v = \chi_{_{S_{h_0}^c[v](0) \cap V}}\) and \(S_{h_0}^c[v](0) \cap \partial V\) is a segment, it follows from
  \cite[Corollary 1.1]{C96} that  \(\|v_{ii}\|_{L^\infty(B_{r_1}(0))} \leq C\) for some small \(r_1 > 0\) and a constant \(C > 0\).
  It follows that \(|v(x_1, 0)| \leq C|x_1|^2\) for some constant \(C > 0\).
  Then, by the same proof as in \cite[Lemma 3.1]{CLW3}, we have that
  \begin{equation}\label{eq:ae1}
    u(x) \geq C_\epsilon |x_1|^2 \quad \text{for} \ x = (x_1, x_2) \in U \ \text{near} \ 0.
  \end{equation}
  Following the same proof as in \cite[Lemma 3.2]{CLW3}, we have that
  \begin{equation}\label{eq:ae2}
    u(0, t) \leq C_\epsilon |t|^{2-\epsilon} \quad \text{for} \ |t| \ \text{small}.
  \end{equation}
  By \eqref{eq:ae1} and \eqref{eq:ae2}, we can apply the proof of \cite[Lemma 3.3]{CLW3} to obtain the uniform density estimate, namely,
  \begin{equation*}
    \frac{|S_h^c[u](0)  \cap U|}{|S_h^c[u](0) |} \geq \delta
  \end{equation*}
  for \(h \leq h_0\), where \(\delta>0\) is a constant independent of \(h\).
  Once these estimates are available, we can follow the proof of \cite[Lemma 3.4]{CLW3} to derive the tangential \(C^{1, 1-\epsilon}\) estimate for \(u\), namely,
  \begin{equation*}
    B_{C_\epsilon h^{\frac{1}{2}+\epsilon}}(0) \cap \{x_2 = 0\} \subset S_h^c[u](0) .
  \end{equation*}
  Then, by the proof of \cite[Corollary 3.2]{CLW3}, we obtain \(u(x) \leq C_\epsilon |x|^{2-\epsilon}\).
  Part $(ii)$ of the proposition follows by a perturbation argument similar to that in \cite[Section 4]{CLW3}.
\end{proof}

\bibliographystyle{amsplain}

\begin{thebibliography}{12}

  \bibitem{AC}
  E. Andriyanova and S. Chen.
  Boundary $C^{1,\alpha}$ regularity of potential functions in optimal transportation with quadratic cost. \emph{Analysis and PDE}, 9 (2016), 1483--1496.

  \bibitem{Br1}
  Y. Brenier.
  Polar factorization and monotone rearrangement of vector-valued functions.
  \emph{Comm. Pure Appl. Math.}, 44 (1991), no. 4, 375--417.

  \bibitem{C92}
  L. A. Caffarelli. The regularity of mappings with a convex potential. \emph{J. Amer. Math. Soc.}, 5 (1992), 99--104.

  \bibitem{C92b}
  L. A. Caffarelli. Boundary regularity of maps with convex potentials. \emph{Comm. Pure Appl. Math.}, 45 (1992), 1141--1151.

  \bibitem{C93}
  L. A. Caffarelli.  A note on the degeneracy of convex solutions to the Monge-Amp\`ere equation.
  \emph{Comm. Partial Differential Equations}, 18 (1993), 1213--1217.

  \bibitem{C96}
  L. A. Caffarelli. Boundary regularity of maps with convex potentials. II.
  \emph{Ann. of Math.}, 144 (1996), 453--496.

  \bibitem{CM}  L. A. Caffarelli and R. J. McCann.
  Free boundaries in optimal transport and Monge-Amp\`{e}re obstacle problems.
  \textit{Ann. of Math.}, 171 (2010), 673--730.

  \bibitem{CL1}
  S. Chen and J. Liu. Regularity of free boundaries in optimal transportation. Preprint, available at arXiv:1911.10790.

  \bibitem{CL2}
  S. Chen and J. Liu. Global regularity in the Monge-Amp\`ere obstacle problem. Preprint, available at arXiv:2307.00262.

  \bibitem{CL3}
  S. Chen and J. Liu. Regularity of singular sets in optimal transportation. Preprint, available at arXiv:2210.13841.

  \bibitem{CLW1}
  S. Chen, J. Liu and X.-J. Wang.
  Global regularity for the Monge-Amp\`ere equation with natural boundary condition.
  \textit{Ann. of Math.}, (2) 194 (2021), no.3, 745--793.

  \bibitem{CLW3} S. Chen; J. Liu and X.-J. Wang.
  $C^{2,\alpha}$ regularity of free boundary in optimal transport.
  \emph{Comm. Pure Appl. Math.}, 77 (2024), 731--794.

  \bibitem{D91} Ph. Delano\"e.
  Classical solvability in dimension two of the second boundary value problem associated with the Monge-Amp\`ere operator.
  \textit{Ann. Inst. Henri Poincar\'e, Analyse Non Lin\'eaire}, 8 (1991), 443--457.

  \bibitem{AF09} A. Figalli, A note on the regularity of the free boundaries in the optimal partial transport problem.
  \textit{Rend. Circ. Mat. Palermo}, 58 (2009), no. 2, 283--286.

  \bibitem{AFi} A. Figalli, The optimal partial transport problem.
  \textit{Arch. Ration. Mech. Anal.}, 195 (2010), 533--560.

  \bibitem{AFi2}
  A. Figalli, Regularity properties of optimal maps between nonconvex domains in the plane.
  \textit{Comm. Partial Differential Equations}, 35 (2010), 465--479.

  \bibitem{FK1}  A. Figalli and Y.-H. Kim.
  Partial regularity of Brenier solutions of the Monge--Amp\`ere equation.
  \textit{Discrete Contin. Dyn. Syst.}, 28 (2010), 559--565.

  \bibitem{GT}
  D. Gilbarg and N. S. Trudinger.
  \emph{Elliptic partial differential equations of second order}. Reprint of the 1998 edition. Classics in Mathematics. Springer-Verlag, Berlin, 2001.

  \bibitem{I} E. Indrei.
  Free boundary regularity in the optimal partial transport problem.
  \textit{J. Funct. Anal.}, 264 (2013), no. 11, 2497--2528.

  \bibitem{KM1} J. Kitagawa and R. McCann.
  Free discontinuities in optimal transport.
  \emph{Arch. Rational Mech. Anal.}, 232 (2019), 1505--1541.

  \bibitem{Levy} B. L\'evy.
  A numerical algorithm for $L_2$ semi-discrete optimal transport in 3D.
  \emph{ESAIM: Mathematical Modelling and Numerical Analysis}, 49 (2015), 1693--1715.

  \bibitem{LW} J. Liu and X.-J. Wang.
  Interior a priori estimates for the Monge-Amp\`ere equation.
  \emph{Surveys in Differential Geometry} Vol. 19, International Press (2014), 151--177.

  \bibitem{Mer} Q. M\'erigot.
  A multiscale approach to optimal transport.
  \emph{Computer Graphics Forum}, 30 (2011), 1584--1592.

  \bibitem{Mooney}
  C. Mooney.
  Partial regularity for singular solutions to the Monge-Amp\`ere equation.
  \emph{Comm. Pure Appl. Math.}, 68 (2015), no. 6, 1066--1084.

  \bibitem{CR}
  C. Mooney and A. Rakshit.
  Sobolev regularity for optimal transport maps of non-convex planar domains.
  \emph{SIAM J. Math. Anal.}, 56 (2024), 4742--4758.

  \bibitem{SY}
  O. Savin and H. Yu.
  Regularity of optimal transport between planar convex domains.
  \textit{Duke Math. J.}, 169 (2020), 1305--1327.

  \bibitem{U1} J. Urbas.
  On the second boundary value problem of Monge-Amp\`ere type.
  \textit{J. Reine Angew. Math.},  487 (1997), 115--124.

  \bibitem{V1} C. Villani.
  \emph{Topics in optimal transportation}.  Grad. Stud. Math. 58,
  \textit{Amer. Math. Soc.}, 2003.

  \bibitem{V2} C. Villani.
  \emph{Optimal transport: old and new}.  Springer, Berlin, 2006.

  \bibitem{Yu}
  Y. Yu.
  Singular set of a convex potential in two dimensions.
  \textit{Comm. Partial Differential Equations}, 32 (2007), 1883--1894.

\end{thebibliography}

\end{document}